\numberwithin{equation}{section}
\newtheorem{theorem}{Theorem}[section]
\newtheorem{prop}[theorem]{Proposition}
\newtheorem{lem}[theorem]{Lemma}
\theoremstyle{remark}
\newtheorem{rem}[theorem]{Remark}
\newcommand{\R}{\mathbb{R}}
\newcommand{\N}{\mathbb{N}}
\author[M.~Ltifi]{Maroua Ltifi}
\address{Department of Mathematics, Faculty of Science of Gab\`es, university of Gab\`es; Tunisia}
\email{\sl widaltifi@gmail.com}
\title[Strong solution of modified 3D-Navier-Stokes equations ]
{Strong solution of modified 3D-Navier-Stokes equations}
\begin{document}
	\begin{abstract}
		In this paper we study the incompressible Navier-Stokes equations with a logarithm damping $\alpha \log(e+|u|^2)|u|^2u$ in $H^{1}$, where we used new methods, new tools and Fourier analysis.
	\end{abstract}

	
	\subjclass[2010]{35-XX, 35Q30, 76N10}
	\keywords{Navier-Stokes Equations; Critical spaces; Long time decay}

	\maketitle
	\tableofcontents

	
	\section{\bf Introduction}
	
	Recently, the  modified Navier-Stokes ¨Navier-Stokes equation with dumping¨ $\alpha|u|^{\beta-1}u$ was vastly studied by many  researchers:$$
	\begin{cases}
		\partial_t u
		-\Delta u+ u.\nabla u  +\alpha |u|^{\beta-1}u =\;\;-\nabla p\hbox{ in } \mathbb R^+\times \mathbb R^3\\
		{\rm div}\, u = 0 \hbox{ in } \mathbb R^+\times \mathbb R^3\\
	u(0,x)=u^{0}(x).
	\end{cases}
	$$
	It was firstly studied by Cai and Jiu \cite{XJ} in 2008. They are shown by Galerkin's methods the existence of global weak solution $u\in L^{\infty}(L^{2}(\R^3))\cap L^{2}(\dot{H}^{1}(\R^{3}))\cap L^{\beta+1}(L^{\beta+1}(\R^{3}))$ for $\beta\geq1$, global strong solution for any $\beta\geq\frac{7}{2}$ and that the strong solution is unique for any $\frac{7}{2}\leq\beta\leq5$.\\
	In this paper we study the large time behaviour in Fourier norms of the solution to the incompressible Navier-Stokes equations with  damping in three spatial dimensions in $H^{^1}$.
	\begin{theorem}\label{L1}
		Let $u^0\in H^1(\mathbb R^3)$ be a divergence free vector fields. For $\beta>3$  there is a unique global solution $u\in L^\infty(\R^+,H^1(\mathbb R^3)\cap C(\R^+,H^{-2}(\R^3))\cap L^2(\R^+,\dot H^2(\mathbb R^3))$ such that
		\begin{equation}\label{eqth01}\|u(t)\|_{L^2}^2+2\int_0^t\|\nabla u\|_{L^2}^2+2\alpha\int_0^t\|u\|_{L^{\beta+1}}^{\beta+1}\leq \|u^0\|_{L^2}^2.\end{equation}
		\begin{align}\label{eqth02}\nonumber
			\|\nabla u(t)\|_{L^2}^2+2\int_0^t\|\Delta u\|_{L^2}^2+\alpha(\beta-1) \int_{0}^{t}\||u|^{\beta-3} |\nabla|u|^2|^2\|_{L^{1}}\\+2\alpha\int_0^t\||u|^{\beta-1} |\nabla u|^2\|_{L^1}\leq\|\nabla u^0\|_{L^2}+\int_0^t\||u|^{2} |\nabla u|^2\|_{L^1}.\end{align}
	\end{theorem}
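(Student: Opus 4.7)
The plan is to construct strong solutions via a Friedrichs regularization in the Fourier variable, derive uniform $H^1$ bounds by exploiting the damping term, and then pass to the limit. Specifically, I would set $u_n(0)=J_n u^0$ with $J_n=\F^{-1}\mathbf{1}_{\{|\x|\le n\}}\F$ and consider the truncated system obtained by applying $J_n$ and the Leray projector $\mathbb{P}$ to every nonlinear term. This becomes an ODE in the closed subspace $\{v\in L^2(\R^3):\supp \hat v\subset B(0,n)\}$, so Cauchy--Lipschitz yields a local smooth solution $u_n$.

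For the a priori estimates, inequality (\ref{eqth01}) follows by testing the equation against $u_n$: the convection and pressure terms vanish by incompressibility, while the damping contributes the $L^{\be+1}$ dissipation. This bound extends $u_n$ globally. To obtain (\ref{eqth02}), I would test with $-\Delta u_n$. Integration by parts in the damping term gives
$$\int \nabla\bigl(|u|^{\be-1}u\bigr)\cdot\nabla u\,dx=\int |u|^{\be-1}|\nabla u|^2+\tfrac{\be-1}{4}\int |u|^{\be-3}\bigl|\nabla|u|^2\bigr|^2,$$
which produces exactly the two damping-dissipation terms on the left-hand side of (\ref{eqth02}). The convection contribution $\int(u\cdot\nabla u)\cdot(-\Delta u)\,dx$, after one integration by parts using $\nabla\cdot u=0$, is bounded pointwise by $|u|^2|\nabla u|^2$, which is the only non-data term on the right-hand side.

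To close the estimate and extract a uniform $H^1$ bound, the hypothesis $\be>3$ is essential. Indeed, Young's inequality yields $|u|^2\le \e|u|^{\be-1}+C_\e$ pointwise, hence
$$\int_0^t\bigl\|\,|u|^2|\nabla u|^2\bigr\|_{L^1}\le \e\int_0^t\bigl\|\,|u|^{\be-1}|\nabla u|^2\bigr\|_{L^1}+C_\e\int_0^t\|\nabla u\|_{L^2}^2.$$
Choosing $\e<2\al$ absorbs the first term into the damping dissipation, while the second is controlled by a Gr\"onwall argument based on (\ref{eqth01}), yielding uniform bounds $u_n\in L^\I_tH^1\cap L^2_t\dot H^2$.

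Finally, I would pass to the limit $n\to\I$ via Aubin--Lions, reading a uniform bound on $\p_t u_n$ in a negative-regularity space directly from the equation to upgrade weak convergence to strong $C_tH^{-2}_{\mathrm{loc}}$ convergence. Uniqueness follows by testing the difference $w=u-\ti u$ of two solutions against $w$ in $L^2$: the damping contribution is nonnegative by the monotonicity inequality $(|u|^{\be-1}u-|\ti u|^{\be-1}\ti u)\cdot w\ge 0$ and drops out, while the convection difference $(w\cdot\nabla u,w)$ is controlled by the established $H^1$ bound through a standard Gr\"onwall estimate. The main obstacle is the convection/damping balance in the $H^1$ estimate; this is precisely where $\be>3$ enters and is the only non-routine point of the argument.
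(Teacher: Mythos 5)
Your proposal follows essentially the same route as the paper: Friedrichs regularization, the $L^2$ energy identity giving (\ref{eqth01}), testing with $-\Delta u$ so that the damping produces the two dissipation terms $|u|^{\beta-1}|\nabla u|^2$ and $|u|^{\beta-3}|\nabla|u|^2|^2$ while the convection leaves the $|u|^2|\nabla u|^2$ remainder of (\ref{eqth02}), then compactness for the limit and monotonicity of $v\mapsto|v|^{\beta-1}v$ plus Gr\"onwall for uniqueness. You are in fact more explicit than the paper on the one non-routine point: the paper simply asserts that $\beta>3$ yields global boundedness, whereas you supply the closing step, namely the pointwise absorption $|u|^2\le\varepsilon|u|^{\beta-1}+C_\varepsilon$ (valid precisely because $\beta-1>2$) into the damping dissipation followed by Gr\"onwall.
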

	Despite, the  continuity and uniqueness  of this modified equation is still a big open problem for $\beta=3.$ Indeed, the problem is restricted to the case $0<\alpha<\frac{1}{2}$ since the inequality
	\begin{align*}
		\frac{d}{dt}	\|\nabla u\|_{L^2}^{2}+\|\Delta u\|_{L^{2}}+2\alpha \int_{\R^3}|u|^2|\nabla u|^2\leq \int_{\R^3}|u|^2|\nabla u|^2
	\end{align*} is not resolvable for these values of $\alpha$ by classical methods and technics.\\
The trick that will be used in our statement is to increase the function $|u|^{2}u$ by a negligible function in relation to $|u|^{\xi}$, $\xi>0$.	
	\begin{equation}(NSD_{\log})\label{sys}
	\begin{cases}
		\partial_t u
		-\Delta u+ u.\nabla u  +\alpha \log(e+|u^2|)|u|^2u =\;\;-\nabla p\hbox{ in } \mathbb R^+\times \mathbb R^3\\
		{\rm div}\, u = 0 \hbox{ in } \mathbb R^+\times \mathbb R^3\\
		u(0,x) =u^0(x) \in H^{1}. \;\;\hbox{ in }\mathbb R^3,
	\end{cases}
	\end{equation}
	
	where $u=u(t,x)=(u_1,u_2,u_3)$ and $p=p(t,x)$ denote respectively the unknown velocity and the unknown pressure of the fluid at the point $(t,x)\in \mathbb R^+\times \mathbb R^3$, and $\alpha>0$. The terms $(u.\nabla u):=u_1\partial_1 u+u_2\partial_2 u+u_3\partial_3u$, while $u^0=(u_1^o(x),u_2^o(x),u_3^o(x))$ is an initial given velocity. If $u^0$ is quite regular, the divergence free condition determines the pressure $p$. We recall in our case it was assumed the viscosity is unitary ($\nu=1$) in order to simplify the calculations and the proofs of our results.\\
	
	The main result of our work is illustrated in the following theorem :
	
	\begin{theorem}\label{theo1} Let $u^0\in H^1(\mathbb R^3)$ be a divergence free vector fields, then there is a unique global  solution
		$u\in L^\infty(\R^+,H^1(\mathbb R^3)\cap C(\R^+,H^{-2}(\R^3))\cap L^2(\R^+,\dot H^2(\mathbb R^3))$ and $\alpha \log(e+|u|^2|u|^4, \frac{|u|}{e+|u|^2} |\nabla|u|^2|^2, \log(e+|u|^2) |\nabla|u|^2|^2, \log(e+|u|^2))|u|^2 |\nabla u|^2\in L^1(\R^+,L^1(\R^3))$. Moreover, for all $t\geq0$ \begin{equation}\label{eqth1}\|u(t)\|_{L^2}^2+2\int_0^t\|\nabla u\|_{L^2}^2+2\alpha\int_0^t\|\log(e+|u|^2)|u|^4\|_{L^1}\leq \|u^0\|_{L^2}^2.\end{equation}
		\begin{align}\label{eqth2}
			\nonumber\|\nabla u(t)\|_{L^2}^2+2\int_0^t\|\Delta u\|_{L^2}^2+\alpha \int_{0}^{t}\|\frac{|u|}{e+|u_n|^2} |\nabla|u|^2|^2\|_{L^{1}}+\alpha\int_0^t\| \log(e+|u|^2) |\nabla|u|^2|^2\|_{L^1} \\
			+2\alpha\int_0^t\|\log(e+|u|^2) |u|^2\nabla |u|^2\|_{L^1}\leq \|\nabla u^0\|_{L^2}^2e^{a_{\alpha}t}\end{align}
		where, $a_{\alpha}=e^{\frac{1}{2\alpha}}-e$
	\end{theorem}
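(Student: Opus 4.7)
My plan is the classical Leray--Hopf scheme tailored to the logarithmic damping: build approximate solutions by a Friedrichs regularization, push through $L^2$ and $H^1$ a priori estimates, pass to the limit, and prove uniqueness by an energy/monotonicity argument. The decisive step is the $H^1$ estimate, where the logarithm and the threshold $a_\alpha$ are custom-made to close an inequality that is unclosable for the polynomially damped problem at $\beta=3$.

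\textbf{Approximation and $L^2$ estimate.} Let $\mathcal{J}_n$ be the Fourier cutoff on $\{|\xi|\le n\}$ and $\mathbb{P}$ Leray's projector. I would solve
\[
\partial_t u_n-\Delta u_n+\mathbb{P}\mathcal{J}_n\bigl((\mathcal{J}_n u_n)\!\cdot\!\nabla(\mathcal{J}_n u_n)\bigr)+\alpha\,\mathbb{P}\mathcal{J}_n\bigl(\log(e+|\mathcal{J}_n u_n|^2)|\mathcal{J}_n u_n|^2\mathcal{J}_n u_n\bigr)=0
\]
on $\mathbb{P}\mathcal{J}_n L^2$, with initial data $\mathcal{J}_n u^0$. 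The right-hand side is locally Lipschitz (on the finite-energy, frequency-localized set everything is smooth and all $L^p$ norms are equivalent), so Cauchy--Lipschitz yields a unique smooth maximal solution $u_n$. Testing with $u_n$ kills convection and pressure by divergence-freeness while the damping contributes $\alpha\int\log(e+|u_n|^2)|u_n|^4\ge 0$; integrating in $t$ gives \eqref{eqth1} for $u_n$, which in turn makes $u_n$ global.

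\textbf{The $H^1$ estimate --- the key step.} Pair the equation with $-\Delta u_n$. Expanding the damping by parts releases three manifestly nonnegative contributions matching the log-weighted quantities on the left of \eqref{eqth2}:
\[
\alpha\!\int\!\log(e+|u_n|^2)|u_n|^2|\nabla u_n|^2,\ \ \tfrac{\alpha}{2}\!\int\!\log(e+|u_n|^2)\bigl|\nabla|u_n|^2\bigr|^2,\ \ \tfrac{\alpha}{2}\!\int\!\tfrac{|u_n|^2}{e+|u_n|^2}\bigl|\nabla|u_n|^2\bigr|^2.
\]
The convective contribution is bounded by $\tfrac12\|\Delta u_n\|_{L^2}^2+\tfrac12\int|u_n|^2|\nabla u_n|^2$ via Young. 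To absorb the cubic integral I invoke the trick advertised before \eqref{sys}: with $a_\alpha=e^{1/(2\alpha)}-e$, partition $\R^3=\{|u_n|^2<a_\alpha\}\cup\{|u_n|^2\ge a_\alpha\}$; on the first set $|u_n|^2|\nabla u_n|^2\le a_\alpha|\nabla u_n|^2$, and on the second $\log(e+|u_n|^2)\ge\tfrac{1}{2\alpha}$, so $|u_n|^2|\nabla u_n|^2\le 2\alpha\log(e+|u_n|^2)|u_n|^2|\nabla u_n|^2$ is swallowed by the damping. What survives is a differential inequality of the form $\tfrac{d}{dt}\|\nabla u_n\|_{L^2}^2+[\text{nonnegative damping}]\le a_\alpha\|\nabla u_n\|_{L^2}^2$, which Gronwall turns into $\|\nabla u_n(t)\|_{L^2}^2\le\|\nabla u^0\|_{L^2}^2\,e^{a_\alpha t}$; reintegrating in $t$ then yields \eqref{eqth2}.

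\textbf{Limit, uniqueness, continuity.} The uniform bounds provide weak-$\ast$ limits in $L^\infty_t H^1_x\cap L^2_t\dot H^2_x$; the equation gives $\partial_t u_n$ bounded in $L^2_t H^{-2}_x$, so Aubin--Lions furnishes strong $L^2_{\mathrm{loc}}$ convergence of a subsequence. This suffices for $u_n\!\cdot\!\nabla u_n$; for the logarithmic damping, the uniform $L^1_{t,x}$ bound on $\log(e+|u_n|^2)|u_n|^4$ together with pointwise a.e.\ convergence permit Vitali. Continuity $u\in C(\R^+,H^{-2})$ is then read off the equation. Uniqueness uses the monotonicity of $u\mapsto\log(e+|u|^2)|u|^2u$, which is the gradient of the convex potential $\tfrac12\int_0^{|u|^2}\!s\log(e+s)\,ds$: pairing the equation for the difference $w=u-v$ with $w$ makes the damping contribution nonnegative, the convective term is handled by Sobolev and Young, and Gronwall closes using $v\in L^\infty_t H^1_x\cap L^2_t\dot H^2_x$. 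The main obstacle is precisely the absorption step of the $H^1$ estimate; the logarithm and threshold $a_\alpha$ trade the $\alpha<1/2$ restriction for the time-dependent exponential factor $e^{a_\alpha t}$ in \eqref{eqth2}.
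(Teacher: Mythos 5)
Your proposal is correct and follows essentially the same route as the paper: Friedrichs approximation, the $L^2$ energy identity, and then the $H^1$ estimate closed by splitting $\mathbb R^3$ into $\{|u|^2\ge e^{1/(2\alpha)}-e\}$ (where the logarithmic damping absorbs $\tfrac12|u|^2|\nabla u|^2$) and its complement (where $|u|^2$ is bounded by $a_\alpha$ and Gronwall applies), followed by uniqueness via the monotonicity of $x\mapsto\log(e+|x|^2)|x|^2x$ and $u\in L^4_t\dot H^1$. Your partition $\{|u_n|^2<a_\alpha\}\cup\{|u_n|^2\ge a_\alpha\}$ is exactly the paper's set $M_t^c\cup M_t$, and your bookkeeping of the constant in the exponent is in fact slightly cleaner than the paper's.
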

\begin{rem}\label{r1}
If $u$ is the solution of (\ref{sys}), we have $u\in L^{4}(\R^{+},\dot{H}^{1}).$
	\end{rem}
The remainder of our paper is organized as follows. In the second section, we present some notations, definitions, and preliminary results. In Section 3, we will look at the global solution of Theorem \ref{theo1}. Furthermore, the solution's uniqueness and right continuity.
	\section{\bf Notations and preliminary results}
	\subsection{Notations} In this section, we collect some notations and definitions that will be used later.\\
	\begin{enumerate}
		\item[$\bullet$] The Fourier transformation is normalized as
		$$
		\mathcal{F}(f)(\xi)=\widehat{f}(\xi)=\int_{\mathbb R^3}\exp(-ix.\xi)f(x)dx,\,\,\,\xi=(\xi_1,\xi_2,\xi_3)\in\mathbb R^3.
		$$
		\item[$\bullet$] The inverse Fourier formula is
		$$
		\mathcal{F}^{-1}(g)(x)=(2\pi)^{-3}\int_{\mathbb R^3}\exp(i\xi.x)g(\xi)d\xi,\,\,\,x=(x_1,x_2,x_3)\in\mathbb R^3.
		$$
		\item[$\bullet$] The convolution product of a suitable pair of function $f$ and $g$ on $\mathbb R^3$ is given by
		$$
		(f\ast g)(x):=\int_{\mathbb R^3}f(y)g(x-y)dy.
		$$
		\item[$\bullet$] If $f=(f_1,f_2,f_3)$ and $g=(g_1,g_2,g_3)$ are two vector fields, we set
		$$
		f\otimes g:=(g_1f,g_2f,g_3f),
		$$
		and
		$$
		{\rm div}\,(f\otimes g):=({\rm div}\,(g_1f),{\rm div}\,(g_2f),{\rm div}\,(g_3f)).
		$$
		Moreover, if $\rm{div}\,g=0$ we obtain
		$$
		{\rm div}\,(f\otimes g):=g_1\partial_1f+g_2\partial_2f+g_3\partial_3f:=g.\nabla f.
		$$
		\item[$\bullet$] Let $(B,||.||)$, be a Banach space, $1\leq p \leq\infty$ and  $T>0$. We define $L^p_T(B)$ the space of all
		measurable functions $[0,t]\ni t\mapsto f(t) \in B$ such that $t\mapsto||f(t)||\in L^p([0,T])$.\\
		\item[$\bullet$] The Sobolev space $H^s(\mathbb R^3)=\{f\in \mathcal S'(\mathbb R^3);\;(1+|\xi|^2)^{s/2}\widehat{f}\in L^2(\mathbb R^3)\}$.\\
		\item[$\bullet$] The homogeneous Sobolev space $\dot H^s(\mathbb R^3)=\{f\in \mathcal S'(\mathbb R^3);\;\widehat{f}\in L^1_{loc}\;{\rm and}\;|\xi|^s\widehat{f}\in L^2(\mathbb R^3)\}$.\\
		\item[$\bullet$] For $R>0$, the Friedritch operator $J_R$ is defined by
		$$J_R(D)f=\mathcal F^{-1}({\bf 1}_{|\xi|<R}\widehat{f}).$$
		\item[$\bullet$] The Leray projector $\mathbb P:(L^2(\R^3))^3\rightarrow (L^2(\R^3))^3$ is defined by
		$$\mathcal F(\mathbb P f)=\widehat{f}(\xi)-(\widehat{f}(\xi).\frac{\xi}{|\xi|})\frac{\xi}{|\xi|}=M(\xi)\widehat{f}(\xi);\;M(\xi)=(\delta_{k,l}-\frac{\xi_k\xi_l}{|\xi|^2})_{1\leq k,l\leq 3}.$$
		\item[$\bullet$] $L^2_\sigma(\R^3)=\{f\in (L^2(\R^3))^3;\;{\rm div}\,f=0\}$.
		\item[$\bullet$] $\dot H^1_\sigma(\R^3)=\{f\in (\dot H^1(\R^3))^3;\;{\rm div}\,f=0\}$.
		\item[$\bullet$] $C_{r}(I,B)=\{f:I\rightarrow B\mbox{ right continuous }\}$ , where $B$ is Banach space and $I$ is an interval.
		\item[$\bullet$] Let $a\in\R,$ we define $a_{+}=\max(a,0)$.
	\end{enumerate}
	\subsection{Preliminary results}
	In this section, we recall some classical results and we give new technical lemmas.
	\begin{prop}(\cite{HBAF})\label{prop1} Let $H$ be Hilbert space.
		\begin{enumerate}
			\item If $(x_n)$ is a bounded sequence of elements in $H$, then there is a subsequence $(x_{\varphi(n)})$ such that
			$$(x_{\varphi(n)}|y)\rightarrow (x|y),\;\forall y\in H.$$
			\item If $x\in H$ and $(x_n)$ is a bounded sequence of elements in $H$ such that
			$$(x_n|y)\rightarrow (x|y),\;\forall y\in H.$$
			Then $\|x\|\leq\liminf_{n\rightarrow\infty}\|x_n\|.$
			\item If $x\in H$ and $(x_n)$ is a bounded sequence of elements in $H$ such that
			$$\begin{array}{l}
				(x_n|y)\rightarrow (x|y),\;\forall y\in H\\
				\limsup_{n\rightarrow\infty}\|x_n\|\leq \|x\|,\end{array}$$
			then $\lim_{n\rightarrow\infty}\|x_n-x\|=0.$
		\end{enumerate}
	\end{prop}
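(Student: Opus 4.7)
\textbf{Plan for proving Proposition \ref{prop1}.} The three items are the standard sequential weak compactness, weak lower semicontinuity of the norm, and the Radon--Riesz property in a Hilbert space. The plan is to treat them in order, letting each rely on the tools introduced for the previous one.

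For item (1), the idea is a diagonal extraction combined with the Riesz representation theorem. I would first set $H_0 := \overline{\operatorname{span}\{x_n : n\in\N\}}$, which is a separable closed subspace of $H$; fix a countable dense subset $\{y_k\}_{k\ge 1} \subset H_0$. Since $(x_n)$ is bounded, Cauchy--Schwarz shows that for each fixed $k$ the scalar sequence $(x_n \mid y_k)$ is bounded, so by a Cantor diagonal argument there is a subsequence $(x_{\fy(n)})$ along which $(x_{\fy(n)} \mid y_k)$ converges for every $k$. A standard $3\e$-estimate, using the density of $\{y_k\}$ in $H_0$ and the uniform bound $\sup_n\|x_n\|<\I$, then upgrades this to convergence of $(x_{\fy(n)} \mid z)$ for every $z \in H_0$. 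For $z \in H_0^{\perp}$ the inner products $(x_{\fy(n)} \mid z)$ are all zero, so writing an arbitrary $y \in H$ as $z_1+z_2$ with $z_1\in H_0,\,z_2\in H_0^{\perp}$ yields convergence of $(x_{\fy(n)} \mid y)$ to some limit $L(y)$. The functional $L$ is linear and bounded (by $\sup_n\|x_n\|$), so the Riesz representation theorem delivers the required $x\in H$ with $L(y)=(x\mid y)$.

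For item (2), Cauchy--Schwarz gives $|(x_n \mid x)| \le \|x_n\|\,\|x\|$. Passing to the limit on the left, using the assumed weak convergence with test vector $y=x$, produces $\|x\|^2 = (x\mid x) = \lim_n (x_n\mid x) \le \|x\|\,\liminf_n \|x_n\|$. If $\|x\|=0$ the inequality is trivial; otherwise divide by $\|x\|$ to finish.

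For item (3), the trick is the identity
\[
\|x_n - x\|^2 \;=\; \|x_n\|^2 \;-\; 2\operatorname{Re}(x_n\mid x) \;+\; \|x\|^2.
\]
The weak convergence hypothesis with $y=x$ gives $(x_n\mid x)\to\|x\|^2$, and the norm hypothesis gives $\limsup_n \|x_n\|^2 \le \|x\|^2$. Adding these,
\[
\limsup_{n\to\I}\|x_n-x\|^2 \;\le\; \|x\|^2 - 2\|x\|^2 + \|x\|^2 \;=\; 0,
\]
so $x_n\to x$ strongly.

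The only nonroutine step is the construction in (1); the main subtlety is making the diagonal extraction interact correctly with the density argument, i.e., showing that convergence on a dense subset of the separable space $H_0$, together with the uniform bound on $\|x_n\|$, forces convergence on all of $H_0$ and hence (after splitting against $H_0^{\perp}$) on all of $H$. Items (2) and (3) are then short algebraic consequences of Cauchy--Schwarz and the inner-product expansion.
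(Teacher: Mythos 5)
Your proposal is correct. Note that the paper does not prove Proposition \ref{prop1} at all: it is quoted as a known result from Brezis \cite{HBAF}, so there is no in-paper argument to compare against. Your three arguments — separable reduction plus Cantor diagonal extraction plus Riesz representation for weak sequential compactness, Cauchy--Schwarz for weak lower semicontinuity of the norm, and the expansion $\|x_n-x\|^2=\|x_n\|^2-2\operatorname{Re}(x_n\mid x)+\|x\|^2$ for the Radon--Riesz property — are exactly the standard textbook proofs, and each step you outline (in particular the $3\e$ density upgrade and the splitting $H=H_0\oplus H_0^{\perp}$) goes through without difficulty.
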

	\begin{lem}(\cite{JYC})\label{LP}
		Let $s_1,\ s_2$ be two real numbers and $d\in\N$.
		\begin{enumerate}
			\item If $s_1<d/2$\; and\; $s_1+s_2>0$, there exists a constant  $C_1=C_1(d,s_1,s_2)$, such that: if $f,g\in \dot{H}^{s_1}(\mathbb{R}^d)\cap \dot{H}^{s_2}(\mathbb{R}^d)$, then $f.g \in \dot{H}^{s_1+s_2-1}(\mathbb{R}^d)$ and
			$$\|fg\|_{\dot{H}^{s_1+s_2-\frac{d}{2}}}\leq C_1 (\|f\|_{\dot{H}^{s_1}}\|g\|_{\dot{H}^{s_2}}+\|f\|_{\dot{H}^{s_2}}\|g\|_{\dot{H}^{s_1}}).$$
			\item If $s_1,s_2<d/2$\; and\; $s_1+s_2>0$ there exists a constant $C_2=C_2(d,s_1,s_2)$ such that: if $f \in \dot{H}^{s_1}(\mathbb{R}^d)$\; and\; $g\in\dot{H}^{s_2}(\mathbb{R}^d)$, then  $f.g \in \dot{H}^{s_1+s_2-1}(\mathbb{R}^d)$ and
			$$\|fg\|_{\dot{H}^{s_1+s_2-\frac{d}{2}}}\leq C_2 \|f\|_{\dot{H}^{s_1}}\|g\|_{\dot{H}^{s_2}}.$$
		\end{enumerate}
	\end{lem}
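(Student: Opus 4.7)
The plan is to prove the product law via the Bony paraproduct decomposition from Littlewood--Paley theory, which is the standard route to such inequalities. Denoting by $(\Delta_j)_{j\in\Z}$ the homogeneous dyadic blocks and $S_j=\sum_{k\leq j-1}\Delta_k$ the corresponding low-frequency truncations, I decompose
$$fg = T_f g + T_g f + R(f,g), \qquad T_f g := \sum_j S_{j-1}f\,\Delta_j g, \qquad R(f,g):=\sum_{|j-k|\leq 1}\Delta_j f\,\Delta_k g,$$
and estimate each piece in $\dot H^{s_1+s_2-d/2}$ via the square characterization $\|u\|_{\dot H^\sigma}^2\sim\sum_q 2^{2q\sigma}\|\Delta_q u\|_{L^2}^2$.

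For the paraproduct $T_f g$, the spectral support of each summand lies in a dyadic annulus $\{|\xi|\sim 2^j\}$, so only indices $|j-q|\leq N_0$ contribute to $\Delta_q T_f g$. Bernstein's inequality combined with the hypothesis $s_1<d/2$ yields
$$\|S_{j-1}f\|_{L^\infty}\lesssim \sum_{k\leq j-2}2^{kd/2}\|\Delta_k f\|_{L^2}\lesssim 2^{j(d/2-s_1)}\|f\|_{\dot H^{s_1}},$$
the geometric sum converging precisely because $d/2-s_1>0$. This gives $\|T_f g\|_{\dot H^{s_1+s_2-d/2}}\lesssim \|f\|_{\dot H^{s_1}}\|g\|_{\dot H^{s_2}}$. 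For the symmetric paraproduct $T_g f$, in part 2 the analogous bound uses $s_2<d/2$ on $g$; in part 1, where no assumption on $s_2$ relative to $d/2$ is available, I would instead use the small-index regularity on $g$ itself (which by hypothesis also lies in $\dot H^{s_1}$), producing the extra term $\|f\|_{\dot H^{s_2}}\|g\|_{\dot H^{s_1}}$ that appears on the right-hand side of part 1.

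The remainder $R(f,g)$ is the main obstacle, since each summand is only spectrally supported in a ball of size $2^j$, so $\Delta_q R(f,g)$ picks up all $j\geq q-N_1$ and the sum is not finite. Applying Bernstein to pass from $L^1$ to $L^2$ and writing $\|\Delta_j f\|_{L^2}\leq 2^{-js_1}a_j\|f\|_{\dot H^{s_1}}$, $\|\Delta_j g\|_{L^2}\leq 2^{-js_2}b_j\|g\|_{\dot H^{s_2}}$ with $(a_j),(b_j)\in\ell^2$, one obtains
$$2^{q(s_1+s_2-d/2)}\|\Delta_q R(f,g)\|_{L^2}\lesssim \Bigl(\sum_{j\geq q-N_1}2^{-(j-q)(s_1+s_2)}a_j b_j\Bigr)\,\|f\|_{\dot H^{s_1}}\|g\|_{\dot H^{s_2}}.$$
The hypothesis $s_1+s_2>0$ is exactly what makes the geometric tail summable, after which Young's convolution inequality $\ell^1\ast\ell^2\hookrightarrow\ell^2$ shows the right-hand side is an $\ell^2$ sequence in $q$ with the required norm bound. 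Collecting the three estimates yields both inequalities of the lemma; part 2 is the special case in which both $s_1,s_2<d/2$ already accommodate the symmetric paraproduct, so only a single product of norms is needed.
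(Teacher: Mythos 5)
Your paraproduct argument is correct: the decomposition $fg=T_fg+T_gf+R(f,g)$, the use of $s_1<d/2$ to sum the Bernstein bound on $S_{j-1}f$, the switch to the $\dot H^{s_1}$-regularity of $g$ for the symmetric paraproduct in part 1 (which is exactly where the second term $\|f\|_{\dot H^{s_2}}\|g\|_{\dot H^{s_1}}$ comes from), and the use of $s_1+s_2>0$ to sum the remainder tail are all the right ingredients, and the convolution step closes (note $a_jb_j\in\ell^1$ by Cauchy--Schwarz, so $\ell^1\ast\ell^1\hookrightarrow\ell^1\subset\ell^2$ is the cleanest way to finish, though your $\ell^1\ast\ell^2$ variant also works). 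The paper itself gives no proof of this lemma --- it is quoted from Chemin's notes --- and the proof in that reference is precisely this Littlewood--Paley/Bony argument, so you have reproduced the standard proof rather than diverged from it.
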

	
	\begin{lem}\label{lem45}
		Let $A,T>0$ and $f,g,h:[0,T]\rightarrow\R^+$ three continuous functions such that
		\begin{align}\label{e1}\;\;\;\;\forall t\in[0,T];\;f(t)+\int_0^tg(z)dz & \leq A+\int_0^th(z)f(z)dz.\end{align}
		Then $$\forall t\in[0,T];\;f(t)+\int_0^tg(z)dz\leq A\exp(\int_0^th(z)dz).$$
	\end{lem}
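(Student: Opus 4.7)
The plan is to use a two-step bootstrap: first extract a standard Grönwall bound for $f$ alone, then reinject this bound into the original inequality to recover the stronger statement involving $\int_0^t g$.

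First I would exploit the nonnegativity of $g$. Since $g(z) \geq 0$ on $[0,T]$, the integral $\int_0^t g(z)\,dz$ on the left-hand side of \eqref{e1} can simply be dropped, yielding
\[
 f(t) \leq A + \int_0^t h(z) f(z)\,dz, \qquad t\in[0,T].
\]
This is the hypothesis of the classical scalar Grönwall inequality with nonnegative continuous kernel $h$, whose proof is the standard argument: differentiate $F(t):=A+\int_0^t h(z)f(z)\,dz$, observe $F'(t)=h(t)f(t)\leq h(t)F(t)$, and integrate the differential inequality $(F\exp(-\int_0^\cdot h))' \leq 0$. The conclusion is
\[
 f(t) \leq A\exp\!\left(\int_0^t h(z)\,dz\right), \qquad t\in[0,T].
\]

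Next I would feed this bound back into the right-hand side of \eqref{e1}. Writing $H(t):=\int_0^t h(z)\,dz$, I get
\[
 \int_0^t h(z) f(z)\,dz \leq A\int_0^t h(z) e^{H(z)}\,dz = A\bigl(e^{H(t)}-1\bigr),
\]
by recognizing the integrand as the derivative of $e^{H(z)}$. Substituting into \eqref{e1} gives
\[
 f(t) + \int_0^t g(z)\,dz \leq A + A\bigl(e^{H(t)}-1\bigr) = A\exp\!\left(\int_0^t h(z)\,dz\right),
\]
which is precisely the desired inequality.

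There is no substantial obstacle here; the only point that deserves care is the initial step of discarding $\int_0^t g$ from the left-hand side, which is legitimate only because $g$ is assumed nonnegative (that is why the hypothesis explicitly places $g$ into $\R^+$). Continuity of $f,g,h$ is used to justify the elementary calculus manipulations (differentiation under the integral and fundamental theorem of calculus), but the whole argument is essentially a textbook Grönwall computation sharpened by one bootstrap.
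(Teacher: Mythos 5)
Your proof is correct and follows essentially the same route as the paper's: first apply the classical Gr\"onwall inequality to $f$ alone (using $g\geq 0$ to discard $\int_0^t g$), then reinject the bound $f(z)\leq A\exp(\int_0^z h)$ into the right-hand side of \eqref{e1} and recognize $h(z)e^{H(z)}$ as an exact derivative. No gaps; the bootstrap step is exactly the paper's computation.
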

	\begin{proof} By Gronwall lemma, we get
		$$\forall t\in[0,T];\;f(t)\leq A\exp(\int_0^th(z)dz).$$
		Put this inequality in \ref{e1} we obtain
		$$\begin{array}{lcl}
			\displaystyle f(t)+\int_0^tg(z)dz&\leq&\displaystyle  A+\int_0^th(z)A\exp(\int_0^zh(r)dr)dz\\
			&\leq&\displaystyle  A+A\int_0^th(z)\exp(\int_0^zh(r)dr)dz\\
			&\leq&\displaystyle  A+A\int_0^t\Big(\exp(\int_0^zh(r)dr)\Big)'dz\\
			&\leq&\displaystyle  A+A\Big(\exp(\int_0^th(r)dr)-1\Big)\\
			&\leq&\displaystyle  A\exp(\int_0^th(r)dr),
		\end{array}$$
		which ends the proof.
	\end{proof}
	\begin{lem}\label{lem46}
		Let $d\in\N$ Then, for all $x,y\in\R^d$, we have
		$$\langle\log(e+|x|^2)|x|^2x-\log(e+|y|^2)|y|^2y,x-y\rangle\geq0$$	
	\end{lem}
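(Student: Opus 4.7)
The plan is to prove the monotonicity of the vector field $F(x) = \log(e+|x|^2)|x|^2 x$ by a direct algebraic manipulation, reducing the multidimensional inequality to a one-dimensional monotonicity statement on $[0,\infty)$.

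First I would observe that $F(x) = h(|x|^2)\,x$ where $h(t) := t\log(e+t)$, and that $h:[0,\infty)\to[0,\infty)$ is non-negative and non-decreasing (both factors are non-negative and non-decreasing in $t \ge 0$). Expanding the inner product gives
\begin{align*}
\langle F(x)-F(y),x-y\rangle
&= h(|x|^2)|x|^2 + h(|y|^2)|y|^2 - \bigl(h(|x|^2)+h(|y|^2)\bigr)\langle x,y\rangle.
\end{align*}

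Next I would apply Cauchy--Schwarz $\langle x,y\rangle \le |x|\,|y|$, which is permissible because the coefficient $h(|x|^2)+h(|y|^2)$ is non-negative. Setting $a = |x|$, $b = |y|$, the right-hand side is bounded below by
\begin{align*}
h(a^2)a^2 + h(b^2)b^2 - \bigl(h(a^2)+h(b^2)\bigr)ab
&= h(a^2)\,a(a-b) + h(b^2)\,b(b-a) \\
&= (a-b)\bigl(a\,h(a^2)-b\,h(b^2)\bigr).
\end{align*}
Finally, since the map $r\mapsto r\,h(r^2) = r^3\log(e+r^2)$ is non-decreasing on $[0,\infty)$ (product of non-negative, non-decreasing functions), the quantity $(a-b)\bigl(a\,h(a^2)-b\,h(b^2)\bigr)$ is non-negative, which yields the claimed inequality.

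There is no significant obstacle here; the only subtlety is to ensure that the sign of the Cauchy--Schwarz step is correct, i.e.\ that $h(|x|^2)+h(|y|^2)\ge 0$ so that replacing $\langle x,y\rangle$ by the larger quantity $|x|\,|y|$ only decreases (rather than increases) the expression. An alternative route would be to write $F = \nabla \Phi$ with $\Phi(x) = \int_0^{|x|^2}\tfrac{s}{2}\log(e+s)\,ds$ and check convexity of $\Phi$ via its Hessian, but the direct computation above is shorter and avoids introducing the potential.
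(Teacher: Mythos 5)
Your proof is correct, but it follows a genuinely different route from the paper. The paper sets $a(z)=\log(e+|z|^2)|z|^2$, assumes without loss of generality $|x|\leq|y|$, writes $a(x)x-a(y)y=(a(x)-a(y))x+a(y)(x-y)$, and then argues by cases according to the sign of $\langle x,x-y\rangle$, eventually bounding below by $a(y)(|y|^2-|x||y|)\geq 0$; the write-up of that case analysis is somewhat terse and its two branches are easy to mix up. Your argument instead expands the inner product symmetrically, uses Cauchy--Schwarz (legitimately, since the coefficient $h(|x|^2)+h(|y|^2)$ is non-negative), and reduces everything to the one-dimensional monotonicity of $r\mapsto r^3\log(e+r^2)$ on $[0,\infty)$ via the factorization $(a-b)\bigl(a\,h(a^2)-b\,h(b^2)\bigr)$. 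This buys you a case-free proof and an immediate generalization: the same three lines show that any field $x\mapsto h(|x|^2)x$ is monotone whenever $h\geq 0$ and $r\mapsto r\,h(r^2)$ is non-decreasing, which covers both the damping $|u|^{\beta-1}u$ of Theorem \ref{L1} and the logarithmic damping here. The paper's decomposition, by contrast, stays closer to the geometric picture of the two vectors but requires the auxiliary normalization $|x|\leq|y|$ and a sign discussion. Either proof is acceptable; yours is the cleaner one to cite.
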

	\begin{proof}
		Let $a(z)=\log(e+|z|^2)|z|^2$ and $|x|\leq|y|$ :
		\begin{align*}
			\langle a(x)x-a(y)y,x-y\rangle=&\langle(a(x)-a(y))x,x-y\rangle+a(y)\langle x,x-y\rangle\\=&(a(x)-a(y))\langle x,x-y\rangle+a(y)|x-y|^{2}	
		\end{align*}	
		If $\langle x,x-y\rangle\geq0$ $(a(x)-a(y))\langle x,x-y\rangle+a(y)|x-y|^{2}\geq 0$\\Else,
		$\langle x,x-y\rangle<0$,	e.g:
		\begin{align*}
		(a(x)-a(y))\langle x,x-y\rangle+a(y)|x-y|^{2}	&=a(y)(|x-y|^{2}+\langle x,x-y\rangle) \\&=a(y)(\langle x-y,x-y\rangle-\langle x,x-y\rangle\\& a(y)\langle x-y,-y\rangle\\&=a(y)(|y|^2-\langle x,y\rangle)\\&\geq a(y)(|y|^2- |x||y|)\geq0 \end{align*}
		
	\end{proof}
	\begin{lem}\cite{JM}\label{lem5}
		Let $f:I\rightarrow\R$ be increasing function . Then there is $A\subset\R$ at most countable family such that for all $t$ in $A$,  $f$ is  discontinuous at $t$.
		Moreover, if $f$ is decreasing the $g=-f$.  	
	\end{lem}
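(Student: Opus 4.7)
The plan is to prove the standard fact that a monotone function on an interval has at most countably many discontinuities by exploiting the existence of one-sided limits and injecting the discontinuity set into $\mathbb{Q}$.

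First, I would fix an increasing function $f:I\to\R$ and observe that monotonicity forces one-sided limits to exist at every interior point: for $t$ in the interior of $I$, the quantities
\[
f(t^-)=\sup_{s<t,\,s\in I}f(s),\qquad f(t^+)=\inf_{s>t,\,s\in I}f(s)
\]
are well defined in $\R\cup\{\pm\infty\}$, and in fact finite because $f(t^-)\leq f(t)\leq f(t^+)$. Continuity at $t$ is then equivalent to $f(t^-)=f(t^+)$, so the set of discontinuities $D\subset I$ consists precisely of those $t$ for which the jump $J(t):=f(t^+)-f(t^-)$ is strictly positive. (For endpoints of $I$, only one of the two one-sided limits is relevant, and the same dichotomy holds with the obvious modification.)

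Next, for each $t\in D$ I would choose, using density of $\mathbb{Q}$ in $\R$, a rational number $q(t)\in(f(t^-),f(t^+))$. The key injectivity step: if $s,t\in D$ with $s<t$, then monotonicity gives
\[
f(s^+)=\inf_{s<r<t}f(r)\leq \sup_{s<r<t}f(r)\leq f(t^-),
\]
so the open intervals $(f(s^-),f(s^+))$ and $(f(t^-),f(t^+))$ are disjoint. In particular $q(s)\neq q(t)$, so $q:D\to\mathbb{Q}$ is injective, and therefore $D$ is at most countable. Setting $A=D$ gives the claim.

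Finally, for the decreasing case I would simply set $g=-f$, note that $g$ is increasing, apply the previous argument to $g$, and observe that $f$ and $g$ share the same discontinuity set. There is no real obstacle here; the only subtle point is bookkeeping at the endpoints of $I$ (where only one one-sided limit exists), but that case is handled by the same argument restricted to the appropriate side. The construction of the injection $D\hookrightarrow\mathbb{Q}$ via rational numbers chosen inside disjoint jump intervals is the heart of the proof.
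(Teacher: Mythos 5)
Your argument is the standard and correct one: at each discontinuity of an increasing function the one-sided limits exist and bound a nonempty open jump interval, these intervals are pairwise disjoint by monotonicity, and choosing a rational in each yields an injection of the discontinuity set into $\mathbb{Q}$. The paper itself gives no proof of this lemma --- it is quoted from the reference \cite{JM} --- so there is nothing to compare against beyond noting that your proof establishes exactly what the (somewhat garbled) statement intends, namely that the set of \emph{all} discontinuities of a monotone function is at most countable, with the decreasing case reduced to the increasing one via $g=-f$. No gaps.
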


	\section{\bf Existence and uniqueness of strong solution .}
	\subsection{Proof of Theorem $\ref{L1}$}
	To begin, we take the $L^{2}$ scalar product of the first equation with $u$ and integrate it on $[0,t] $, yielding \begin{align}\label{eq01}\|u(t)\|_{L^2}^2+2\int_0^t\|\nabla u\|_{L^2}^2+2\alpha\int_0^t \|u\|_{L^{\beta+1}}^{\beta+1} \leq \|u^0\|_{L^2}^2\end{align}
	Taking the $\dot{H}^{1}$ scalar product with $u$ as well :
		\begin{align*}
		\frac{1}{2}\frac{d}{dt}\|\nabla u\|_{L^{2}}^{2}+\|\Delta u\|^2_{L^2}+ \int_{\R^3} |u|^{\beta-3} |\nabla|u|^2|^2+\alpha \int_{\R^3} |u|^{\beta-1} |\nabla u|^2 &\leq \langle \nabla (u \nabla u),\nabla u\rangle_{L^{2}}\\	
		\frac{1}{2}\frac{d}{dt}\|\nabla u\|_{L^{2}}^{2}+\frac{1}{2}\|\Delta u\|^2_{L^2}+\frac{\alpha(\beta-1)}{2} \int_{\R^3} |u|^{\beta-3} |\nabla|u|^2|^2+\alpha \int_{\R^3} |u|^{\beta-1} |\nabla u|^2 &\leq\frac{1}{2}\int_{\R^3} |u|^{2} |\nabla u|^2
	\end{align*}
Integrate on $[0,t]$ we get
		\begin{align*}
		\|\nabla u(t)\|_{L^2}^2+2\int_0^t\|\Delta u\|_{L^2}^2+\alpha(\beta-1) \int_{0}^{t}\||u|^{\beta-3} |\nabla|u|^2|^2\|_{L^{1}}\\+2\alpha\int_0^t\||u|^{\beta-1} |\nabla u|^2\|_{L^1}\leq\|\nabla u^0\|_{L^2}+\int_0^t\||u|^{2} |\nabla u|^2\|_{L^1}.\end{align*}
$\bullet$For $\beta>3$ we obtain the global existence for bounded solution.\\
$\bullet$For $\beta=3$ Indeed, the problem is limited to the case $0<\alpha<\frac{1}{2}$ because the inequality (\ref{eqth02}) is unsolvable for these $\alpha$ values.
To solve our statement, we will add the function  $\log(e+|u|2)$ to $|u|^{2}u$.
We will solve the incompressible Navier-Stokes equations with logarithmic damping \ref{theo1} at the next party
\subsection{Proof of Theorem $\ref{theo1}$}
$$$$
	$\bullet$ {\bf{A priori estimates}}  \\
	We start by taking the $L^2$ scalar product of the first equation with $u$, we get \begin{align}\label{eq1}\|u(t)\|_{L^2}^2+2\int_0^t\|\nabla u\|_{L^2}^2+2\alpha\int_0^t\|\log(e+|u|^2)|u|^4\|_{L^1}\leq \|u^0\|_{L^2}^2\end{align}
	Also, taking the $\dot{H}^{1}$ scalar product of $(NSD_{\log})$ with $u$ :
	\begin{align*}
		\frac{1}{2}\frac{d}{dt}\|\nabla u\|^2+\|\Delta u\|^2_{L^2}+\frac{\alpha}{2} \int_{\R^3} \frac{|u|^2}{e+|u|^2} |\nabla|u|^2|^2+\frac{\alpha}{2} \int_{\R^3} \log(e+|u|^2) |\nabla|u|^2|^2&\\+ \int_{\R^3}(\frac{1}{2}-\alpha \log(e+|u|^2))|u|^2 |\nabla u|^2 &\leq 0
	\end{align*}
	For $t\geq0$, put $M_{t}=\{x\in\R^3:~~(\alpha\log(e+|u|^2)-\frac{1}{2})\geq0\}$.\\
	Further
	\begin{align*}
		\int_{\R^3}(\frac{1}{2}-\alpha \log(e+|u|^2))|u|^2 |\nabla u|^2&=\int_{M_{t}}(\frac{1}{2}-\alpha \log(e+|u|^2))|u|^2 |\nabla u|^2\\&+\int_{M^c_{t}}(\frac{1}{2}-\alpha \log(e+|u|^2))|u|^2 |\nabla u|^2.
	\end{align*}
	Since
	\begin{align*}
		\int_{M^c_{t}}(\frac{1}{2}-\alpha \log(e+|u|^2))|u|^2 |\nabla u|^2&\leq (e^{\frac{1}{2\alpha}}-e)_{+}\int_{M^c_{t}}|\nabla u|^2,	
	\end{align*}
	so
	\begin{align*}\frac{1}{2}\frac{d}{dt}\|\nabla u\|^{2}_{L^{2}}+\frac{1}{2}\|\Delta  u\|^{2}_{L^{2}}+\frac{\alpha}{2} \|\frac{|u|^2}{e+|u|^2} |\nabla|u|^2|^2\|_{L^{1}}\\+\frac{\alpha}{2} \|\log(e+|u|^2) |\nabla|u|^2|^2\|_{L^{1}}&\leq (e^{\frac{1}{2\alpha}}-e)_{+}\int_{M^c_{n}}|\nabla u|^2\\&\leq(e^{\frac{1}{2\alpha}}-e)_{+}\int_{\R^3}|\nabla u|^2.\end{align*}
	Integrate on $[0,t]$, we obtain:
	\begin{align*}
		\|\nabla u\|^{2}_{L^{2}}+\int_{0}^{t}\|\Delta u\|^{2}_{L^{2}}+\alpha \int_{0}^{t}\|\frac{|u^2|}{e+|u|^2} |\nabla|u|^2|^2\|_{L^{1}}\\+\int_{0}^{t}\alpha \|\log(e+|u|^2) |\nabla|u|^2|^2\|_{L^{1}}\|_{L^{1}}&\leq\|\nabla u^{0}\|^{2}_{L^{2}}+2(e^{\frac{1}{2\alpha}}-e)_{+}\int_{0}^{t}\|\nabla u\|^{2}_{L^2}
	\end{align*}
	By Lemma \ref{lem45}, we get
	\begin{align}\label{eq2}
		\nonumber\|\nabla u\|^{2}_{L^{2}}+\int_{0}^{t}\|\Delta u\|^{2}_{L^{2}}+\alpha \int_{0}^{t}\|\frac{|u^2|}{e+|u|^2} |\nabla|u|^2|^2\|_{L^{1}}\\ +\int_{0}^{t}\alpha \|\log(e+|u|^2) |\nabla|u|^2|^2\|_{L^{1}}\|_{L^{1}}\nonumber&\leq\|\nabla u^{0}\|^{2}_{L^{2}}e^{(2(e^{\frac{1}{2\alpha}}-e)_{+}t)}\\&\leq\|\nabla u^{0}\|^{2}_{L^{2}}e^{a_{\alpha}t},
	\end{align}
	where, $a_{\alpha}=(e^{\frac{1}{2\alpha}}-e)_{+}.$ \\
	Absolutely, these bounds come from the approximate solutions via the Friederich's regularization procedure. 
	The passage to the limit follows using classical argument by combining Ascoli's Theorem and the Cantor Diagonal Process \cite{HB}. And this solution in $L^{\infty}(\R^{+},H^1)\cap L^2(\R^{+},\dot{H^{2}})$ satisfies (\ref{eq1}) and (\ref{eq2}).\\
	$\bullet${\bf{Uniqueness :}}
	$$$$
	 Let $u,v$ two solutions of $(NSD_{\log})$ and $w=u-v$. We have:
	
	$$\partial_t u
	-\Delta u+ u.\nabla u  +\alpha \log(e+|u|^2)|u|^2u =\;\;-\nabla p_{1}\hbox{ in } \mathbb R^+\times \mathbb R^3~~~~~~~~~~(1)$$
	$$	\partial_t v
	-\Delta v+ v.\nabla v +\alpha \log(e+|v|^2)|v|^2v =\;\;-\nabla p_{2}\hbox{ in } \mathbb R^+\times \mathbb R^3~~~~~~~~~~~(2)$$
	
	Take $(1)-(2)$, we get:
	(1)-(2) given: $$\partial_t w -\Delta w+w.\nabla u+v\nabla w +\alpha(\log(e+|u|^2)|u|^2u-\log(e+|v|^2)|v|^2u)=-\nabla (p_1-p_2)$$
	Taking, the $L^2$ scalar product, we have :
	\begin{align*}
		\frac{1}{2}\frac{d}{dt}\|w\|^{2}_{L^2}+\|\nabla w\|^{2}_{L^2} +\alpha\langle(\log(e+|u|^2)|u|^2u-\log(e+|v|^2)|v|^2v),w\rangle_{L^{2}}&\leq|\langle w\nabla u, w\rangle|_{L^2}
	\end{align*}
	Using Lemma \ref{lem46}, we get:
	\begin{align*}
		\frac{1}{2} \frac{d}{dt} \|w\|^{2}_{L^{2}}+\|\nabla w\|^{2}_{L^{2}}&\leq \|w \|^2_{L^{4}}\|\nabla u\|_{L^{2}}\\&\leq \|w \|^2_{H^{\frac{3}{4}}}\|\nabla u\|_{L^{2}}\\
		&\leq \|w \|^{\frac{1}{2}}_{L^2} \|w\|^{\frac{3}{2}}_{\dot{H}^{1}}\|\nabla u\|_{L^{2}}\\&\leq \frac{3}{4}\|\nabla w\|^{2}_{L^2}+\frac{1}{4}\|w\|_{L^2}^2\|\nabla u\|^{4}_{L^2}\\
		&\leq \frac{1}{2}\|\nabla w\|^{2}_{L^2}+\frac{1}{4}\|w\|_{L^{2}}^2\|\nabla u\|^{4}_{L^2}.
	\end{align*}
	According to Gronwall Lemma and using (\ref{r1}), we have
	$$\|w(t)\|^{2}_{L^{2}}\leq \|w(0)\|^2_{L^{2}}e^{c\int_{0}^{t}\|\nabla u\|^{4}_{L^2}},$$	
	but $w(0)=0$, so $u=v$.\\
	$\bullet${\bf{Right continuity:}}
	$$$$
	$\ast$ Right continuity at $0$:
	\begin{align*}
		\|\nabla u\|^{2}_{L^{2}}+\int_{0}^{t}\|\Delta u\|^{2}_{L^{2}}+\alpha \int_{0}^{t}\|\frac{|u^2|}{e+|u|^2} |\nabla|u|^2|^2\|_{L^{1}}\\+\int_{0}^{t}\alpha \|\log(e+|u|^2) |\nabla|u|^2|^2\|_{L^{1}}\|_{L^{1}}&\leq\|\nabla u^{0}\|^{2}_{L^{2}}e^{c_{\alpha}t}.
	\end{align*}
	Let $t_{k}>0$ such that $t_{k}\underset{k\rightarrow\infty}\rightarrow 0 $ then
	\begin{equation} \label{e2}
		\underset{k\rightarrow\infty}{\limsup}\|\nabla u(t_{k})\|^{2}_{L^{2}}\leq \|\nabla u^{0}\|_{L^2}^{2}.\end{equation}
	From (\ref{e2}) we have the Right continuity at $0$.\\
	$\ast$ Right continuity at $t_{0}$:
	Let $t_{0}>0$. Consider the following problem:
	$$
	\begin{cases}
		\partial_t v
		-\Delta v+ v.\nabla v  +\alpha \log(e+|v|^2)|v|^2v =\;\;-\nabla q\hbox{ in } \mathbb R^+\times \mathbb R^3\\
		{\rm div}\, v = 0 \hbox{ in } \mathbb R^+\times \mathbb R^3\\
		v(0,x) =u(t_{0},x) \;\;\hbox{ in }\mathbb R^3.
	\end{cases}
	$$
	By uniqueness of solution $v(t)=u(t+t_{0})$ moreover $u$ is continuous on the right at $0$. Then $u$ is continuous on the right at $t_{0}$.\\
	$\ast$ The subset of $\R^+$
$$A=\{t\geq0/\,\|\nabla u\|_{L^{2}}\;{\rm is\;not\;continuous\;at}\;t\}$$
is at most countable.\\
Indeed: Let $0\leq t_{1}\leq t_{2}$ and set the function $f:t\mapsto e^{-a_{\alpha}t}\|\nabla u(t)\|^{2}_{L^2}$. 
Clearly, we have
$$A=\{t\geq0/\,f\;{\rm is\;not\;continuous\;at}\;t\}.$$
By the last proof, we have
	$$ \|\nabla u(t_{2})\|^{2}_{L^2}\leq\|\nabla u(t_{1})\|^{2}_{L^2} e^{a_{\alpha}(t_{2}-t_{1})},$$
	which implies
	$$\|\nabla u(t_{2})\|^{2}_{L^2}e^{-a_{\alpha}t_{2}}\leq \|\nabla u(t_{1})\|^{2}_{L^2} e^{-a_{\alpha}t_{1}}\Longleftrightarrow f(t_2)\leq f(t_1).$$
	Thus, $f$ is a decreasing function. According to (\ref{lem5}), $A$ is at most countable, which complete the proof.\\
	{\bf Acknowledgements.}
	It is pleasure to thank Jamel Benameur for insightful comments and assistance through this work.

\end{document}